\newtheorem{theorem}{Theorem}[section] 
\newcommand{\bt}{\begin{theorem}} 
\newcommand{\et}{\end{theorem}} 
\newtheorem{corollary}[theorem]{Corollary}   
\newcommand{\bc}{\begin{corollary}} 
\newcommand{\ec}{\end{corollary}} 
\newtheorem{ex}[theorem]{Example}  
\newtheorem{lemma}[theorem]{Lemma}  
\newcommand{\bl}{\begin{lemma}} 
\newcommand{\el}{\end{lemma}} 
\newtheorem{proposition}[theorem]{Proposition} 
\newcommand{\bp}{\begin{proposition}} 
\newcommand{\ep}{\end{proposition}} 
\newtheorem{definition}[theorem]{Definition}
\newcommand{\bd}{\begin{definition}}  
\newcommand{\ed}{\end{definition}} 
\newtheorem{remark}[theorem]{Remark} 
\newcommand{\br}{\begin{remark}} 
\newcommand{\er}{\end{remark}} 
\newcommand{\R}{\mathbb{R}}
\newcommand{\Sp}{\mathbb{S}}
\newcommand{\Fto}{\stackrel {\mathcal{F}}{\longrightarrow} }
\DeclareMathOperator{\len}{L}
\DeclareMathOperator{\Vol}{Vol}
\DeclareMathOperator{\Diam}{Diam}
\DeclareMathOperator{\Area}{Area}
\newcommand{\lip}{\operatorname{Lip}}
\newcommand{\mass}{{\mathbf M}}
\def\set{\textrm{set}}
\newcommand{\intcurr}{{\mathbf I}} 
\newcommand{\VFto}{\stackrel {\mathcal{VF}}{\longrightarrow} }
\begin{document}
\title{Volume Above Distance Below with Boundary II}

\author{Brian Allen}
\address{Lehman College, CUNY}
\email{brianallenmath@gmail.com}

\author{Edward Bryden$^\dagger$}
\address{Universiteit Antwerpen}
\email{etbryden@gmail.com}
\thanks{$^\dagger$funded by FWO grant 12F0223N}

\begin{abstract}
 It was shown by B. Allen, R. Perales, and C. Sormani \cite{Allen-Perales-Sormani-VADB} that on a closed manifold where the diameter of a sequence of Riemannian metrics is bounded, if the volume converges to the volume of a limit manifold, and the sequence of Riemannian metrics are $C^0$ converging from below then one can conclude
  volume preserving Sormani-Wenger Intrinsic Flat convergence. The result was extended to manifolds with boundary by B. Allen and R. Perales \cite{Allen-Perales} by a doubling with necks procedure which produced a closed manifold and reduced the case with boundary to the case without boundary. The consequence of the doubling with necks procedure was requiring a stronger condition than necessary on the boundary. Using the estimates for the Sormani-Wenger Intrinsic Flat distance on manifolds with boundary developed by B. Allen and R. Perales \cite{Allen-Perales}, we show that only a bound on the area of the boundary is needed in order to conclude volume preserving
  intrinsic flat convergence for manifolds with boundary. We also provide an example which shows that one should not expect convergence without a bound on area.
\end{abstract}
\maketitle
\section{Introduction}
An important notion of convergence one can use for
studying scalar curvature stability problems is the Sormani-Wenger Intrinsic
Flat (\textit{SWIF}) convergence \cite{SW-JDG}. Since the definition of SWIF convergence is rooted in the
theory of currents on general metric spaces,
it is helpful to have a theorem which identifies geometric assumptions
on a sequence of Riemannian manifolds which are sufficient to conclude SWIF convergence, and
so simplify the application of SWIF convergence to Riemannian manifolds.
One such theorem, called the VADB Theorem, was provided by B. Allen, R. Perales, and C. Sormani
\cite{Allen-Perales-Sormani-VADB}. It states that on a closed manifold, if the diameter of a sequence of Riemannian
metrics is bounded, the volume converges to the volume of a limit manifold, and the Riemannian metric is
$C^0$ converging from below then one can conclude volume preserving SWIF convergence.
This was extended by B. Allen and R. Perales \cite{Allen-Perales} to manifolds with boundary by a doubling with necks procedure which produced a closed manifold and reduced the case with boundary to the case without boundary. The price of the doubling with necks procedure was an additional hypothesis on the boundary which was sufficient but not necessary. In this note we remove this additional hypothesis
on the boundary and are able to prove the desired theorem with just a bound on the area of the
boundary. 

An important scalar curvature stability result which used the VADB theorem was the resolution
of the stability of Llarull's theorem on the sphere.
This was accomplished by B. Allen, E. Bryden,
and D. Kazaras \cite{ABKLLarull} in dimesion $n=3$ using spacetime harmonic functions and
extended by S. Hirsch and Y. Zhang \cite{HZ} for $n \ge 3$ using spinors.
Both proofs heavily rely on the VADB theorem to conclude SWIF convergence.
In addition, in the same paper \cite{HZ}, S. Hirsch and Y. Zhang are able to establish the stability
of Llarull's theorem in the sense of C. Dong and A. Song \cite{Dong-Song} for Lipschitz 1 maps from arbitrary spin manifolds
into the sphere.
Interestingly, the convergence defined by C. Dong and A. Song  doesn't arise from a metric,
or even a topology, on the space
of Riemannian manifolds, unlike Sormani-Wenger Intrinsic Flat convergence or $d_p$ convergence first defined by  M-C. Lee, A. Naber, and R. Neumayer \cite{LNN}. See C. Sormani \cite{SormaniIAS} and B. Allen \cite{AllenOberwolfach} for more discussion of scalar curvature stability problems. Other applications of the VADB theorem to scalar curvature stability problems include conformal
cases of Geroch stability by  B. Allen \cite{Allen-Conformal-Torus}, J. Chu and M.-C. Lee 
\cite{JCMCL}, the graph case of Geroch stability by  A. Pacheco,  C. Ketterer and
R. Perales \cite{PKP19}, the graph case of stability of the positive mass theorem by C. Sormani,
L.-H. Huang, and D. Lee \cite{HLS}, B. Allen and R. Perales \cite{Allen-Perales}, R. Perales,
L.-H. Huang and D. Lee \cite{Huang-Lee-Perales} in the asymptotically Euclidean case, and
A. Pacheco,  M. Graf and R. Perales \cite{PGPAsymHyp} in the asymptotically hyperbolic case.

Since there are scalar curvature stability problems where it is natural to consider manifolds with boundary, it is important that a VADB theorem with minimal assumptions on the boundary is available in this case. 
We see by Example \ref{ex-Area Removed} that a bound on area is the minimal boundary condition one can assume to show SWIF convergence. Additionally, by Example \ref{ex-Cinched-Sphere} and Example \ref{ex-NoL^mConv}, originally appearing in the work of B. Allen and C. Sormani \cite{Allen-Sormani-2}, it is known that one should not expect SWIF convergence in the
absence of $C^0$ convergence from below of the Riemannian metrics or volume convergence.
The VADB theorem proven below shows that these minimal hypotheses imply SWIF convergence.

\begin{theorem}\label{thm:Main Theorem}
Let $M^n$ be a compact, connected, and oriented manifold with boundary, $g_j$ a sequence of continuous Riemannian metrics, and $g_0$ a smooth Riemannian metric. If 
\begin{align}
\Diam(M,g_j) &\le D,
  \\  \Vol(M,g_j) &\rightarrow \Vol(M,g_0)
  \\ \Area(\partial M,g_j) &\le A,
  \\ g_j(v,v) &\ge ( 1 - C_j)g_0(v,v), \quad \forall p \in M, v \in T_pM, \quad C_j \searrow 0,
\end{align}
then $(M,g_j)$ converges in the volume preserving Sormani-Wenger Intrinsic Flat sense to $(M,g_0)$.
\end{theorem}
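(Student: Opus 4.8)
\noindent\textit{Plan of proof.} The plan is to reduce to a one-sided comparison, split $M$ into a part where $g_j$ is two-sidedly close to the limit and an exceptional part of small volume, and feed this into the intrinsic flat distance estimates for manifolds with boundary developed in \cite{Allen-Perales}. First I would set $h_j := (1-C_j)g_0$, so that hypothesis $(4)$ becomes $h_j \le g_j$ as quadratic forms, $(M,h_j)\to(M,g_0)$ with convergence of diameter, volume and boundary area, and $\delta_j := \Vol(M,g_j)-\Vol(M,h_j)\to 0$ by $(2)$. Since $d_{\mathcal F}\big((M,h_j),(M,g_0)\big)\to 0$ (the metrics converge smoothly), by the triangle inequality it suffices to show $d_{\mathcal F}\big((M,g_j),(M,h_j)\big)\to 0$. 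The convergence will then automatically be volume preserving, since the masses of the associated integral currents equal $\Vol(M,g_j)$ and $\Vol(M,h_j)$, which converge by hypothesis.

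Next, for a parameter $\lambda\in(0,1]$ to be chosen, let $B_j^\lambda\subseteq M$ be the open set where $dV_{g_j} > (1+\lambda)\,dV_{h_j}$, enlarged by a thin $h_j$-collar around the corresponding ``bad'' subset $\{dA_{g_j} > (1+\lambda)\,dA_{h_j}\}$ of $\partial M$, and arranged by Sard's theorem so that $\partial B_j^\lambda$ is a smooth hypersurface. Two elementary facts drive the argument. First, because $g_j\ge h_j$, on $M\setminus B_j^\lambda$ the determinant bound $dV_{g_j}\le(1+\lambda)\,dV_{h_j}$ upgrades to the pointwise two-sided bound $h_j\le g_j\le(1+\lambda)\,h_j$ (in an $h_j$-orthonormal frame $g_j$ is a symmetric matrix with all eigenvalues $\ge 1$ and product $\le 1+\lambda$, hence all eigenvalues in $[1,1+\lambda]$), and likewise $dA_{g_j}\le(1+\lambda)\,dA_{h_j}$ along $\partial M\setminus B_j^\lambda$, so that $\Area(\partial M\setminus B_j^\lambda,g_j)\le(1+\lambda)^{(n-1)/2}\Area(\partial M,h_j)$ stays bounded. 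Second, since $dV_{g_j}-dV_{h_j}\ge 0$ everywhere and exceeds $\tfrac{\lambda}{1+\lambda}\,dV_{g_j}$ on $B_j^\lambda$, integration gives $\Vol(B_j^\lambda,g_j)+\Vol(B_j^\lambda,h_j)\le C(n)\,\delta_j/\lambda + o_j(1)$, the $o_j(1)$ coming from the thin collar.

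I would then estimate $d_{\mathcal F}\big((M,g_j),(M,h_j)\big)$ by the triangle inequality along $(M,g_j)\to(M\setminus B_j^\lambda,g_j)\to(M\setminus B_j^\lambda,h_j)\to(M,h_j)$. The two outer steps are bounded, via the flat norm of the excised $n$-current, by $\Vol(B_j^\lambda,g_j)$ and $\Vol(B_j^\lambda,h_j)$, hence by $C(n)\,\delta_j/\lambda + o_j(1)$. For the middle step one applies the Allen--Perales estimate \cite{Allen-Perales} on the compact manifold-with-boundary $M\setminus B_j^\lambda$, where $h_j\le g_j\le(1+\lambda)h_j$: because the distortion has size $\lambda$, the filling in that estimate need only have height of order $\lambda D$ rather than of order $D$, so its mass is of order $\lambda D\cdot\Vol(M,g_j)$, the volume-defect term is of order $\delta_j$, and the boundary term is of order $\lambda D\cdot\Area(\partial(M\setminus B_j^\lambda),g_j)$ — and it is exactly here, through $\Area(\partial M,g_j)\le A$, that hypothesis $(3)$ is used. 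Choosing, for instance, $\lambda=\lambda_j:=\sqrt{\delta_j}$, every term tends to $0$ and the theorem follows.

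The hard part will be the boundary bookkeeping in the middle step. One must check that the portion of the filling lying over the exceptional region $B_j^\lambda$, where only the crude full-height-$D$ construction is available, contributes at most $O\big(D\,\Vol(B_j^\lambda,g_j)\big)=o_j(1)$; that the restricted length metrics on $M\setminus B_j^\lambda$ entering the intrinsic flat estimate are comparable to the ambient ones (using that $\partial B_j^\lambda$ is regular and that $\Diam(M,g_j)\le D$); and, most delicately, that the newly created boundary $\partial B_j^\lambda$, whose area is \emph{not} controlled, does no harm, since it is glued to the identical subset of $(M,h_j)$, along which the two metrics agree up to the factor $1+\lambda$, so it is absorbed into $B_j^\lambda$ and paid for by the small volume of that region. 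Carrying out this accounting directly within the intrinsic flat estimates of \cite{Allen-Perales}, rather than through the doubling-with-necks reduction used there, is what makes the area bound — and nothing stronger — suffice; the sharpness of this hypothesis is witnessed by Example \ref{ex-Area Removed}.
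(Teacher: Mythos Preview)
Your route is genuinely different from the paper's, and as written it has a real gap at the ``middle step.'' The Allen--Perales estimate you want to invoke (Theorem~\ref{thm-est-SWIF}) compares $(M,g_j)$ and $(M,h_j)$ through the \emph{ambient} distances: one must exhibit a set $W_j$ of nearly full measure on which $d_{g_j}(x,y)\le d_{h_j}(x,y)+2\delta_j$ for all $x,y\in W_j$, with $\delta_j\to 0$. Knowing $h_j\le g_j\le(1+\lambda)^2h_j$ pointwise on $W_j=M\setminus B_j^\lambda$ does \emph{not} give this: the bound $d_{g_j}(x,y)\le(1+\lambda)d_{h_j}(x,y)$ would follow only if the $h_j$-geodesic from $x$ to $y$ stayed inside $W_j$, but $B_j^\lambda$ is defined by a volume-form threshold and there is no reason $g_0$-geodesics (equivalently $h_j$-geodesics) avoid it. On the portion of such a geodesic lying in $B_j^\lambda$ you have no upper bound on $g_j$, so the required distance inequality simply fails. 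If instead you switch to the intrinsic length metrics on the excised manifold $M\setminus B_j^\lambda$, the two-sided pointwise bound does control distances, but now (i) the intrinsic diameter can blow up or the set can disconnect, and (ii) the estimate \eqref{Fest} requires an area bound on the \emph{full} boundary $\partial(M\setminus B_j^\lambda)$, which includes the internal piece $\partial B_j^\lambda$ whose area you yourself note is uncontrolled. Your claim that this piece ``does no harm'' because it is ``glued to the identical subset'' and ``absorbed into $B_j^\lambda$'' is not an argument: in the construction of Definition~\ref{defn-Z} the term $h_jA$ in \eqref{Fest} comes from $\partial M_j\times[0,h_j]$ and cannot be traded for a small-volume term.

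The paper avoids all of this by first proving the stronger statement that $d_{g_j}\to d_{g_0}$ pointwise a.e.\ on $M\times M$, and only then invoking Theorem~\ref{thm-est-SWIF} (via Egoroff, packaged as Theorem~\ref{thm-IF conv from Pointwise ae}). The pointwise convergence is obtained by a conformal change supported in a collar of $\partial M$ (Proposition~\ref{prop:convexify_boundary}) making $\partial M$ convex for $\tilde g_0=\phi g_0$; the sequence $\tilde g_j=\phi g_j$ still satisfies the VADB hypotheses, so the convex-boundary case (Theorem~\ref{thm-Pointwise ae Convergence Convex Boundary}) yields $d_{\tilde g_j}\to d_{\tilde g_0}$ a.e. A curve-pushing lemma (Lemma~\ref{lem:rough_approximate_geodesic}) then gives $d_{\tilde g_0}\le d_{g_0}+\varepsilon$ on $M_t^c\times M_t^c$, and since $\tilde g_j\ge g_j$ one deduces $d_{g_j}\to d_{g_0}$ a.e. In short, the paper's new idea is the conformal reduction to the convex case; your good-set/bad-set decomposition, while natural, does not produce the global distance inequality the Allen--Perales machinery actually needs.
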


One can also think of Theorem \ref{thm:Main Theorem} as a stability theorem for the following rigidity result. If $(M,g_1)$ and $(M,g_2)$ are two Riemannian manifolds so that $g_1 \ge g_2$ and $\Vol(M,g_1) =\Vol(M,g_2)$ then $g_1=g_2$. This way of viewing the VADB theorem was first pointed out by B. Allen, R. Perales, and C. Sormani (see Theorem 2.3 of \cite{Allen-Perales-Sormani-VADB}). Theorem \ref{thm:Main Theorem} extends this stability result to the case of Reimannian manifolds with boundary so that the assumption on the boundary is minimal.

In section \ref{sec-background}, we review the definition of SWIF distance between integral current spaces. In addition, we review the construction of special metric spaces $Z$ which were originally constructed in \cite{Allen-Perales-Sormani-VADB} and was extended to the case of manifolds with boundary in \cite{Allen-Perales}. Several other foundational theorems from \cite{Allen-Perales} are also reviewed which will be used to prove Theorem \ref{thm:Main Theorem}.

In Section \ref{sec-Examples}, we review examples given in \cite{Allen-Perales-Sormani-VADB} which show the necessity of the assumptions made in Theorem \ref{thm:Main Theorem}. We also give a new example sequence with boundary which shows that the assumption of bounded area of the boundary is the minimal assumption one could make on the boundary.

In Section \ref{sec-Pointwise Convergence}, we proved that under the hypotheses of Theorem \ref{thm:Main Theorem} we can conclude pointwise almost everywhere convergence. This is accomplished by doing a conformal change near the boundary to make the new manifold convex, then applying a theorem from \cite{Allen-Perales}, and showing that the result about the conformally changed sequence gives us the desired conclusion for the original sequence. We expect that this method could be useful for reducing the convergence theorems for sequences of Riemannian manifolds with boundary to the convex case in general. The proof of Theorem \ref{thm:Main Theorem} is then finished by applying a result of B. Allen and R. Perales \cite{Allen-Perales} which allows one to go from pointwise almost everywhere convergence to SWIF convergence.

\section{Background}\label{sec-background}

In this section we review definitions and theorems from previous papers which are essential to understanding the proof of Theorem \ref{thm:Main Theorem}.

\subsection{Sormani-Wenger Intrinsic Flat Distance Review}

We start by reviewing the definition of the flat distance of H. Federer and W. H. Fleming \cite{FF} which was extended to arbitrary metric spaces by L. Ambrosio and B. Kircheim \cite{AK}.

Let $(Z,d)$ be a complete metric space, $\lip(Z)$ the set of  real valued Lipschitz functions on $Z$, and 
$\lip_b(Z)$ the bounded ones.  An $m$-dimensional current $T$ on $Z$ 
is a multilinear map $T: \lip_b(Z) \times [\lip(Z)]^n  \to \R$  that satisfies properties which can be found in Definition 3.1 of  \cite{AK}.  
From the definition of $T$ we know there exists a finite Borel measure on $Z$,  $||T||$,  called the mass measure of $T$.
Then the mass of $T$ is defined as $\mass(T)=||T||(Z)$.  The  boundary of $T$,  $\partial T: \lip_b(Z) \times [\lip(Z)]^{n-1}  \to \R$  
is the linear functional given by
\begin{align}
\partial T(f, \pi) = T(1, (f, \pi)),
\end{align}
and for any Lipschitz function $\varphi: Z \to Y$  the push forward of $T$,   ${\varphi}_{\sharp} T : \lip_b(Y) \times [\lip(Y)]^{m}  \to \R$
is the $n$-dimensional current given by 
\begin{align}
{\varphi}_{\sharp} T (f, \pi) 
= T( f\circ \varphi, \pi \circ \varphi ).
\end{align}
Furthermore,  the following inequality holds
\begin{equation}\label{eq-pushMeasure}
||  \varphi_\sharp T||  \leq \lip(\varphi)^n \varphi_\sharp ||T||. 
\end{equation} 
 
An $n$-dimensional integral current in $Z$ is an $n$-dimensional current that can be written as a countable sum of terms, 
\begin{align}
    T=  \sum_{i=1}^\infty \varphi_{i\sharp} [[\theta_i]],
\end{align} 
with $\theta_i \in L^1(A_i, \mathbb R)$ integer constant functions, 
such that $\partial T$ is a current.  The class that contains all $n$-dimensional integral currents of $Z$ is denoted by $\intcurr_n(Z)$. 
For  $T\in I_n(Z)$,  L. Ambrosio and B. Kirchheim proved that the subset 
\begin{align}
\set(T)= \left\{ z \in Z \, | \, \liminf_{r \downarrow 0} \frac{\|T\|(B_r(z))}{ r^n }> 0 \right\}
\end{align}   
is $\mathcal H^n$-countably recitifiable. That is, $\set(T)$ can be covered by images of Lipschitz maps from $\R^n$  to $Z$
up to a set of zero  $\mathcal H^n$-measure.

The flat distance between two integral currents $T_1, T_2  \in  \intcurr_{n} (Z)$ is defined as
\begin{align}
\begin{split}
d_{F}^Z( T_1, T_2)=\inf\Bigl\{  \mass(U)+ \mass(V)\,  |&
\, \, U \in \intcurr_{n}(Z), \, V   \in  \intcurr_{n+1} (Z),
\\
 & \, \, T_2 -T_1 =U + \partial V  \Bigr\}.    
\end{split}
\end{align}

With the definition of flat convergence on a general metric space in hand we are ready to define integral current spaces which are the spaces for which Sormani-Wenger intrinsic flat distance is defined. One should see C. Sormani and S. Wenger \cite{SW-JDG} for more details. An $m$-dimensional integral current space $(X, d, T)$ consists of a metric space $(X, d)$ and an $m$-dimensional integral current defined on the completion of $X$, $T\in I_n(\bar{X})$, such that $\set(T)=X$.  Since in this paper we will be mostly considering the SWIF distance between Riemannian manifolds it is helpful to see how the integral current space structure applies in this case.

\begin{ex}
For an  $n$-dimensional compact oriented $C^0$ Riemannian manifold $(M^n,g)$,
the triple $(M,d_g, [[M]])$ is an $n$-dimensional integral current space.
Here
$d_g$ is the usual metric induced by $g$. Then $[[M]] :  \lip_b(M) \times [\lip(M)]^n \to \R$ is given by 
\begin{align}\label{eq-canonicalT}
[[M]] = &  \sum_{i,k} {\psi_i}_\sharp [[1_{A_{ik}}]] 
\end{align}
where $\Bigl\|[[M]]\Bigr\|= dVol_g$ and we have chosen $\{(U_i, \psi_i)\}_{i \in \mathbb N}$ a $C^1$ locally finite atlas  of $M$ consisting of positively oriented Lipschitz charts, 
$\psi_i :  U_i  \subset \R^n   \to M$  and $A_{ik}$  are precompact Borel sets such that $\psi_i(A_{ik})$ have disjoint images for all $i$ and $k$ and cover $M$ for $\mathcal H^n$-almost everywhere.
\end{ex}

We say that an integral current space $(X,d,T)$ is precompact if $X$ is precompact with respect to $d$.
Given two $m$-dimensional integral current spaces, $(X_1, d_1, T_1)$ and $(X_2, d_2, T_2)$, a current preserving isometry between them is a 
 metric isometry $\varphi: X_1 \to X_2$  such that $\varphi_\sharp T_1=T_2$.   We are now ready to state the definition of the SWIF distance between integral current spaces.

\begin{definition}[Sormani-Wenger \cite{SW-JDG}]\label{defn-SWIF} 
Given two $m$-dimensional precompact integral current spaces 
$(X_1, d_1, T_1)$ and $(X_2, d_2,T_2)$, the Sormani-Wenger Intrinsic Flat distance   
between them is defined as
\begin{align}
\begin{split}
d_{\mathcal{F}}&\left( (X_1, d_1, T_1), (X_2, d_2, T_2)\right)
\\&=\inf  \Bigl\{d_F^Z(\varphi_{1\sharp}T_1, \varphi_{2\sharp}T_2)|  \,
 (Z,d_Z) \text{ complete} ,\, \varphi_j: X_j \to Z   \text{ isometries}\Bigr\}.         
\end{split}
\end{align}
\end{definition}

The function $d_{\mathcal{F}}$ is a distance up to current preserving isometries. One should note that if a sequence  $(X_j,d_j,T_j)$ of $n$-dimensional integral current spaces 
converges in the SWIF sense to $(X,d,T)$ then it follows that
\begin{align}\label{Eq-Boundary Convergence}
    (\partial X_j, d_j, \partial T_j) \Fto (\partial X, d, \partial T),
\end{align}
in the SWIF sense as well. We say that a sequence $(X_j,d_j,T_j)$ of $n$-dimensional integral current spaces converges in the volume preserving Sormani-Wenger Intrinsic Flat sense, $\mathcal{VF}$, to $(X,d,T)$ if the sequence converges with respect to the intrinsic flat distance
to $(X,d,T)$ and the masses $\mass(T_j)$ converge to $\mass(T)$.  

\subsection{Volume Above Distance Below with Boundary I Review}

In order to estimate the Sormani-Wenger Intrinsic Flat distance, B. Allen and R. Perales \cite{Allen-Perales} made the following definition of a common metric space $Z$, building off of the definition given by Allen-Perales-Sormani \cite{Allen-Perales-Sormani-VADB}.

\begin{definition}[Definition 4.5 in Allen and Perales \cite{Allen-Perales} ]\label{defn-Z}
Let $M$ be a compact manifold, $M_j=(M,g_j)$ and $M_0=(M,g_0)$ be continuous Riemannian manifolds, $F_j: M_j \rightarrow M_0$ a bijective map and $W_j \subset M_j$. Define the set
\begin{align}
Z : =  M_0  \sqcup \left(   M \times [0,h_j] \right) \sqcup  M_j  \,\,|_\sim
\end{align}
where  $x \sim (F_j^{-1}(x),0)$ for all $x \in M_0$ and  $x \sim (x,h_j)$ for all $x \in W_j$.
Define the function $d_Z: Z \times Z \to [0, \infty)$  by
\begin{align}
d_Z(z_1, z_2) = \inf \{L_Z(\gamma):\, \gamma(0)=z_1,\, \gamma(1)=z_2\}
\end{align}
where $\gamma$ is any piecewise smooth curve joining $z_1$ to $z_2$ and the length function 
$L_Z$ is defined as follows,  $L_Z|_{M_j} = L_{g_j}$,  $L_Z|_{M_0} = L_{g_0}$ and $L_Z|_{M \times (0,h_j]} = L_{g_j +  dh^2}$, the length space metric associated to $g_j+dh^2$.

Define functions $\varphi_0: M_0 \to Z$ and $\varphi_j: M_j \to Z$ by 
\begin{align}
\varphi_0(x) = & (F_j^{-1}(x), 0) \\
\varphi_j(x) =   &
\begin{cases}
x   & x \notin \overline{W}_j \\
(x, h_j)  &  \textrm{otherwise.} 
\end{cases}
\end{align}
\end{definition}

Now B. Allen and R. Perales use the metric space defined in Definition \ref{defn-Z} to estimate the Sormani-Wenger Intrinsic Flat distance \ref{defn-SWIF} by calculating the Flat distance between two Riemmanian manifolds isometrically embedded in $Z$.

\begin{theorem}[Theorem 4.7 in Allen-Perales \cite{Allen-Perales}]\label{thm-est-SWIF}
Let $M$ be an oriented and compact manifold, $M_j=(M,g_j)$ and $M_0=(M,g_0)$ be continuous Riemannian manifolds with  
\begin{align}
  \Diam(M_j) \le D, \qquad  \Vol(M_j)\le V,\qquad \Vol(\partial M_j) \leq A, 
\end{align}
and $F_j: M_j \rightarrow M_0$ a biLipschitz  and distance non-increasing map with a $C^1$ inverse. 
Let $W_j \subset  M_j$ be a measurable set with 
\begin{align}\label{eq-volCond}
\Vol( M_j \setminus W_j) \le V_j
\end{align}
and assume that there exists a $\delta_j > 0$ so that for all $ x,y \in W_j$,
 \begin{align}\label{eq-distCond}
d_j(x,y) \le d_0( F_j(x),  F_j(y)) +2 \delta_j
\end{align}
and that $h_j = \sqrt{2 \delta_j D + \delta_j^2}$.
Then
\begin{align}\label{Fest}
d^Z_{F}( \varphi_0(M_0), \varphi_j(M_j)) \le 2V_j + h_j V +  h_j A 
\end{align}
where  $Z$ is the space described in Definition \ref{defn-Z}.
\end{theorem}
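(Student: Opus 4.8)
The plan is to exhibit explicit filling currents $U,V$ and then read the bound \eqref{Fest} off the definition of the flat distance. Abbreviate by $T_0=[[M_0]]$ and $T_j=[[M_j]]$ the canonical integral currents carried by $M$ with the metrics $g_0$ and $g_j$, and let $\psi\colon M\times[0,h_j]\to Z$ be the inclusion of the middle block of Definition \ref{defn-Z}. The only metric feature of $Z$ I will use is that $\psi$, $\varphi_0$, and $\varphi_j$ are all $1$-Lipschitz; this is immediate from $d_Z$ being an infimum of $L_Z$-lengths over \emph{all} piecewise smooth curves in $Z$, together with $L_Z$ restricting to $L_{g_j+dh^2}$ on $M\times(0,h_j]$, to $L_{g_0}$ on $M_0$ and to $L_{g_j}$ on $M_j$, and with the gluing relations $\varphi_0(x)\sim x$ and $(x,h_j)\sim x$ (the latter for $x\in W_j$). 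Set $V:=\psi_\sharp[[M\times[0,h_j]]]\in\intcurr_{n+1}(Z)$. By the push-forward mass inequality \eqref{eq-pushMeasure},
\begin{align}
\mass(V)\ \le\ \Vol\bigl(M\times[0,h_j],\ g_j+dh^2\bigr)\ =\ h_j\,\Vol(M_j)\ \le\ h_j V .
\end{align}

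Next I would compute $\partial V=\psi_\sharp\,\partial[[M\times[0,h_j]]]$. By the product-boundary formula, $\partial[[M\times[0,h_j]]]$ splits into a top face carried by $M\times\{h_j\}$, a bottom face carried by $M\times\{0\}$, and a lateral face carried by $\partial M\times[0,h_j]$, each with the induced orientation. Pushing these into $Z$ through $\psi$ and the two gluing relations of Definition \ref{defn-Z}: the bottom face is identified, via the biLipschitz homeomorphism $F_j^{-1}$ (orientation preserving after fixing conventions), with $\varphi_{0\sharp}T_0$ up to sign; the top face splits as the part lying over $W_j$, which is glued into $M_j$ and agrees there with the part $A_j$ of $\varphi_{j\sharp}T_j$ supported over $W_j$, plus a ``free'' remainder $S_{\mathrm{top}}$ lying over $M\setminus W_j$ with $\mass(S_{\mathrm{top}})\le\Vol(M\setminus W_j,g_j)\le V_j$ by \eqref{eq-volCond}; and the lateral face pushes to a current $S_{\mathrm{side}}$ with
\begin{align}
\mass(S_{\mathrm{side}})\ \le\ \Vol\bigl(\partial M\times[0,h_j],\ g_j+dh^2\bigr)\ =\ h_j\,\Vol(\partial M_j)\ \le\ h_j A .
\end{align}
Thus, up to signs, $\partial V=A_j+S_{\mathrm{top}}-\varphi_{0\sharp}T_0\pm S_{\mathrm{side}}$.

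Now I would set $U:=\varphi_{j\sharp}T_j-\varphi_{0\sharp}T_0-\partial V$, so that $\varphi_{j\sharp}T_j-\varphi_{0\sharp}T_0=U+\partial V$ holds tautologically, with $U\in\intcurr_n(Z)$ (a difference of integral currents) and $V\in\intcurr_{n+1}(Z)$. Writing $\varphi_{j\sharp}T_j=A_j+R_j$ with $R_j$ the part supported over $M\setminus W_j$, so that $\mass(R_j)=\Vol(M\setminus W_j,g_j)\le V_j$, the $A_j$ and $\varphi_{0\sharp}T_0$ contributions cancel against the corresponding pieces of $\partial V$, leaving $U=R_j-S_{\mathrm{top}}\mp S_{\mathrm{side}}$ as an identity of currents; hence
\begin{align}
\mass(U)+\mass(V)\ \le\ \mass(R_j)+\mass(S_{\mathrm{top}})+\mass(S_{\mathrm{side}})+\mass(V)\ \le\ 2V_j+h_jA+h_jV .
\end{align}
Since $\varphi_{j\sharp}T_j-\varphi_{0\sharp}T_0=U+\partial V$ with $U\in\intcurr_n(Z)$, $V\in\intcurr_{n+1}(Z)$ is a competitor in the infimum defining $d^Z_F(\varphi_0(M_0),\varphi_j(M_j))$, estimate \eqref{Fest} follows.

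The step I expect to be the main obstacle is the orientation bookkeeping: one must pin down the signs in the product-boundary formula and verify that the orientations of $M_0$, of $M_j$, and of the top and bottom faces of the cylinder are consistent through the gluings and the reparametrization $F_j^{-1}$, so that the cancellations above genuinely occur rather than doubling up. A second point requiring care, and the reason the term $h_jA$ appears at all, is that $\partial M\neq\emptyset$, so $M\times[0,h_j]$ has a nonempty lateral boundary $\partial M\times[0,h_j]$ contributing the current $S_{\mathrm{side}}$; this lateral term is exactly what is absent in the closed case treated in \cite{Allen-Perales-Sormani-VADB}. It is worth noting that the distance hypothesis \eqref{eq-distCond} and the precise value $h_j=\sqrt{2\delta_jD+\delta_j^2}$ play essentially no role in \emph{this} estimate; they are needed only when one upgrades it to a bound on the intrinsic flat distance, where they ensure that $\varphi_0$ and $\varphi_j$ are genuine isometric embeddings of $M_0$ and $M_j$ into $Z$.
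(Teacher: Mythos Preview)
The paper does not contain its own proof of this statement: Theorem \ref{thm-est-SWIF} is quoted as Theorem 4.7 of Allen--Perales \cite{Allen-Perales} and is used here purely as a background tool, so there is no in-paper argument to compare against. That said, your proposal is correct and reproduces exactly the construction carried out in \cite{Allen-Perales}: one takes the cylinder $M\times[0,h_j]$ with the product metric $g_j+dh^2$ as the filling $V$, reads off $\mass(V)\le h_j\Vol(M_j)$, splits $\partial V$ into bottom, top, and lateral faces, and lets $U$ absorb the two pieces over $M\setminus W_j$ (contributing $2V_j$) together with the lateral face (contributing $h_jA$). Your closing remark is also on point: the hypotheses \eqref{eq-distCond} and the specific value $h_j=\sqrt{2\delta_jD+\delta_j^2}$ are precisely what guarantee that $\varphi_0,\varphi_j$ are isometric embeddings into $(Z,d_Z)$, which is what one needs to pass from the flat estimate in $Z$ to an intrinsic flat bound via Definition \ref{defn-SWIF}; they are not used in the mass count itself.
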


The method for applying Theorem \ref{thm-est-SWIF} is to show pointwise a.e. convergence of the sequence of distance functions and apply Egeroff's Theorem in order to conclude uniform convergence on a good set $W_j$. By implementing this method B. Allen and R. Perales were able to prove the following theorem which is the main tool we will combine with the results of the next section in order to prove Theorem \ref{thm:Main Theorem}.

\begin{theorem}[Theorem 4.1 in Allen-Perales \cite{Allen-Perales}]\label{thm-IF conv from Pointwise ae}
Let $M$ be a compact oriented manifold, $g_0$  a smooth Riemannian metric, and $g_j$ a sequence of continuous Riemannian metrics
such that
\begin{align} 
g_0(v,v) \le g_j(v,v), \quad \forall v \in T_pM,
\end{align}
\begin{align}
\Diam(M,g_j) \le D,
\end{align}
\begin{align}
\Vol(M,g_j) \to  \Vol(M,g_0),
\end{align}
\begin{align}
\Area(\partial M,g_j) \leq A,
\end{align}
and
\begin{align}
d_j(p,q)\to d_0(p,q),\textrm{ for } dV_{g_0} \times dV_{g_0} \textrm{ a.e. } (p,q)
\in M \times M. 
\end{align}
Then
\begin{align}
(M,g_j) \VFto (M,g_0).
\end{align}
\end{theorem}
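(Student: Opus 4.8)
The plan is to realize each pair $(M_j,M_0)$ inside a common metric space $Z$ of the form given in Definition~\ref{defn-Z} and estimate the flat distance there via Theorem~\ref{thm-est-SWIF}, so that the conclusion follows once we check the two convergences required of the volume‑preserving SWIF notion: $d_{\mathcal F}\bigl((M,g_j),(M,g_0)\bigr)\to 0$ and $\mass([[M]]_{g_j})\to\mass([[M]]_{g_0})$. The mass statement is immediate, since $\mass([[M]]_{g_j})=\Vol(M,g_j)$ and the third hypothesis (in the numbering of the theorem, the volume convergence) gives $\Vol(M,g_j)\to\Vol(M,g_0)$. So the work is entirely in the first convergence, i.e. in producing, for each $j$, an admissible choice of $F_j$, $W_j$, $\delta_j$ (hence $h_j$) making the right‑hand side of \eqref{Fest} tend to $0$.

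First I would take $F_j=\mathrm{id}\colon (M,g_j)\to(M,g_0)$. The lower bound $g_j\ge(1-C_j)g_0$ does not quite make the identity distance non‑increasing, so I would instead rescale: set $\tilde g_j=(1-C_j)^{-1}g_j$ wait — more cleanly, work with $F_j=\mathrm{id}$ from $(M,g_j)$ to $(M,(1-C_j)g_0)$, which is $1$‑Lipschitz and has $C^1$ inverse, and absorb the harmless factor $(1-C_j)\to 1$ at the end (it affects $\Vol(M,(1-C_j)g_0)=(1-C_j)^{n/2}\Vol(M,g_0)$ only through a factor $\to 1$, and similarly the area and diameter bounds persist after enlarging $D$, $A$, $V$ by a fixed constant). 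The diameter, volume, and area bounds in the hypotheses of Theorem~\ref{thm:Main Theorem} then supply the uniform $D$, $V$, $A$ needed in Theorem~\ref{thm-est-SWIF}. It remains to choose $W_j$ and $\delta_j$. For this I would invoke the pointwise a.e.\ distance convergence $d_j(p,q)\to d_0(p,q)$ for $dV_{g_0}\times dV_{g_0}$‑a.e.\ $(p,q)$ — this is exactly the conclusion of Section~\ref{sec-Pointwise Convergence} as described in the introduction, and I would cite it as an established ingredient rather than re‑prove it here. Given that a.e.\ convergence, Egorov's theorem on $M\times M$ yields, for each $\varepsilon>0$, a set of product measure at least $\Vol(M,g_0)^2-\varepsilon$ on which the convergence is uniform; a Fubini/slicing argument then produces a single set $W_j\subset M$ with $\Vol(M,g_0\setminus W_j)$ small and $\sup_{x,y\in W_j}\bigl(d_j(x,y)-d_0(x,y)\bigr)\le 2\delta_j$ with $\delta_j\to 0$, which is precisely condition~\eqref{eq-distCond}. (Here one uses that $g_j\ge(1-C_j)g_0$ makes $d_j\ge\sqrt{1-C_j}\,d_0$, so only the upper direction $d_j\le d_0+2\delta_j$ needs the a.e.\ input, and the lower direction is automatic up to the $(1-C_j)$ factor already being tracked.)

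With $\delta_j\to 0$ and $D$ fixed we get $h_j=\sqrt{2\delta_j D+\delta_j^2}\to 0$, and $V_j:=\Vol(M,g_j\setminus W_j)\to 0$ because $\Vol(M,g_j)\to\Vol(M,g_0)$ while the $g_j$‑volume of $W_j$ is squeezed between $(1-C_j)^{n/2}$ times its $g_0$‑volume and $\Vol(M,g_j)$, forcing the complement to vanish in $g_j$‑measure. Feeding these into \eqref{Fest} gives
\begin{align}
d^Z_{F}\bigl(\varphi_0(M_0),\varphi_j(M_j)\bigr)\le 2V_j+h_jV+h_jA\to 0,
\end{align}
and since $\varphi_0,\varphi_j$ are isometric embeddings into the complete space $Z$ (its completion), Definition~\ref{defn-SWIF} yields $d_{\mathcal F}\bigl((M,g_j),(M,(1-C_j)g_0)\bigr)\to 0$. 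A final triangle‑inequality step comparing $(M,(1-C_j)g_0)$ with $(M,g_0)$ — whose SWIF distance is controlled by the trivial homothety and tends to $0$ as $C_j\to 0$ — gives $d_{\mathcal F}\bigl((M,g_j),(M,g_0)\bigr)\to 0$. Combined with the mass convergence, this is $\mathcal{VF}$ convergence.

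\textbf{Main obstacle.} The crux is obtaining the pointwise a.e.\ distance convergence $d_j\to d_0$ that feeds Egorov's theorem; without it the argument collapses, and with it everything else is bookkeeping. Since the introduction explicitly carries this out in Section~\ref{sec-Pointwise Convergence} (via a conformal change near the boundary reducing to the convex case of \cite{Allen-Perales}), I would treat that as the genuine content and present the above as the assembly that converts it, through Theorem~\ref{thm-IF conv from Pointwise ae} (equivalently, through Theorems~\ref{thm-est-SWIF} and the Egorov construction), into the stated volume‑preserving SWIF convergence. A secondary technical point to handle carefully is the $(1-C_j)$ rescaling: one must check that enlarging the constants $D,V,A$ by a uniform factor keeps all hypotheses of Theorem~\ref{thm-est-SWIF} satisfied and that the homothety error vanishes, but this is routine.
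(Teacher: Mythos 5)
Your assembly (Theorem \ref{thm-est-SWIF} applied with $F_j=\mathrm{id}$, plus Egorov and a slicing argument to extract $W_j$ and $\delta_j$ from the a.e.\ convergence of distances, plus mass convergence from volume convergence) is exactly the method the paper describes for this result, which it does not reprove but cites as Theorem 4.1 of Allen--Perales, so your proposal is correct and takes essentially the same route. Two minor corrections: the a.e.\ convergence $d_j\to d_0$ is a hypothesis of this theorem, not something to import from Section \ref{sec-Pointwise Convergence}, and the metric hypothesis here is $g_0\le g_j$ rather than $g_j\ge(1-C_j)g_0$, so the identity map $(M,g_j)\to(M,g_0)$ is already distance non-increasing and your $(1-C_j)$ rescaling together with the final homothety/triangle-inequality step is unnecessary.
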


B. Allen and R. Perales also observed that when the boundary of a Riemannian manifold is convex
the boundary case of Theorem \ref{thm:Main Theorem} is no different to prove than the compact
without boundary case proved by B. Allen, R. Perales, and C. Sormani \cite{Allen-Perales-Sormani-VADB}.
We now state the thoerem which follows from this observation which will be used in the proof of
Theorem \ref{thm:a.e_convergence_on_distance_level_sets}.

\begin{theorem}[Theorem 4.2 in Allen-Perales \cite{Allen-Perales}]\label{thm-Pointwise ae Convergence Convex Boundary}
Let $M$ be a compact oriented manifold, $g_0$ a smooth Riemannian metric, and $g_j$ a sequence of  continuous Riemannian metrics
such that
\begin{align}
g_0(v,v) \le g_j(v,v), \quad \forall v \in T_pM,
\end{align}
\begin{align}
\Diam(M,g_j) \le D,
\end{align}
\begin{align}\label{volsconv}
\Vol(M,g_j) \to  \Vol(M,g_0),
\end{align}
\begin{align}
\Area(\partial M,g_j) \leq A,
\end{align}
and  the interior of $(M,g_0)$ is convex, i.e. for all $p,q$ in the interior of $M$, 
and $\gamma:[0,1]  \to M$ $g_0$ minimizing geodesic joining $p$ to $q$ 
we have that $\gamma(I)$ remains in the interior of $M$.  Then
\begin{align}\label{convae2}
d_j(p,q)\to d_0(p,q),  \qquad \textrm{ for } dV_{g_0} \times dV_{g_0} \textrm{ a.e. } (p,q) \in M \times M. 
\end{align}
\end{theorem}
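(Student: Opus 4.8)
The plan is to adapt the integral‑geometric argument used by Allen--Perales--Sormani \cite{Allen-Perales-Sormani-VADB} for closed manifolds, exploiting the convexity of the interior of $(M,g_0)$ to confine $g_0$‑minimizing geodesics to the interior so that the key estimate for the \emph{fixed} smooth metric $g_0$ goes through as in the closed case. Since $g_0(v,v)\le g_j(v,v)$ for all $v$, every piecewise smooth curve is at least as long in $g_j$ as in $g_0$, hence $d_0(p,q)\le d_j(p,q)$ for all $p,q$; in particular $\Diam(M,g_0)\le \Diam(M,g_j)\le D$. It therefore suffices to prove $\limsup_{j\to\infty} d_j(p,q)\le d_0(p,q)$ for $dV_{g_0}\times dV_{g_0}$‑a.e.\ $(p,q)$. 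Let $a_j:=dV_{g_j}/dV_{g_0}$ be the volume density of $g_j$ relative to $g_0$. Pointwise, if $1\le\lambda_1\le\cdots\le\lambda_n$ are the eigenvalues of $g_j$ with respect to $g_0$ (all $\ge 1$ by hypothesis), then $a_j=(\lambda_1\cdots\lambda_n)^{1/2}\ge 1$ and, since $\lambda_1\cdots\lambda_{n-1}\ge 1$, also $a_j^2\ge\lambda_n$; consequently $\sqrt{g_j(v,v)}\le a_j\sqrt{g_0(v,v)}$ for all $v\in T_pM$. Moreover $\int_M a_j\,dV_{g_0}=\Vol(M,g_j)\to\Vol(M,g_0)=\int_M 1\,dV_{g_0}$ with $a_j\ge 1$, so $a_j\to 1$ in $L^1(M,dV_{g_0})$.

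Because $g_0$ is smooth and the interior of $(M,g_0)$ is convex, for $dV_{g_0}\times dV_{g_0}$‑a.e.\ $(p,q)$ there is a unique unit‑speed $g_0$‑minimizing geodesic $\gamma_{p,q}\colon[0,d_0(p,q)]\to M$, it lies in the interior of $M$, and $(p,q)\mapsto\gamma_{p,q}$ is measurable on this full‑measure set. Testing $d_j(p,q)$ against $\gamma_{p,q}$ and using the pointwise bound above,
\[
d_j(p,q)\ \le\ \len_{g_j}(\gamma_{p,q})\ =\ \int_0^{d_0(p,q)}\!\sqrt{g_j(\gamma_{p,q}',\gamma_{p,q}')}\,dt\ \le\ \int_0^{d_0(p,q)}\! a_j(\gamma_{p,q}(t))\,dt,
\]
so with $E_j(p,q):=\int_0^{d_0(p,q)}\bigl(a_j(\gamma_{p,q}(t))-1\bigr)\,dt\ge 0$ we get $0\le d_j(p,q)-d_0(p,q)\le E_j(p,q)$. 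The crux is a Santal\'o‑type estimate: there is $C=C(g_0)<\infty$ with
\[
\int_{M\times M}\int_0^{d_0(p,q)} f(\gamma_{p,q}(t))\,dt\,dV_{g_0}(p)\,dV_{g_0}(q)\ \le\ C\int_M f\,dV_{g_0}
\]
for every measurable $f\colon M\to[0,\infty]$. This is obtained by showing that the pushforward of $dV_{g_0}\times dV_{g_0}\otimes dt$, restricted to the full‑measure set of triples $(p,q,t)$ with $t\in[0,d_0(p,q)]$ and $\gamma_{p,q}$ the unique minimizer, under $(p,q,t)\mapsto\gamma_{p,q}(t)$ is absolutely continuous with respect to $dV_{g_0}$ with density bounded by $C$; one applies the coarea formula and bounds the relevant Jacobian by Jacobi‑field estimates for $g_0$ along minimizing geodesics, which carry no interior conjugate points. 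Here convexity enters: the geodesics remain in the interior, so after extending $g_0$ to a smooth metric on a slightly larger open manifold the estimate reduces to the closed‑manifold computation of \cite{Allen-Perales-Sormani-VADB}, with $C$ depending only on $n$, the $C^2$‑size of $g_0$, and $D$. Applying this with $f=a_j-1\ge 0$ gives
\[
\int_{M\times M} E_j(p,q)\,dV_{g_0}(p)\,dV_{g_0}(q)\ \le\ C\bigl(\Vol(M,g_j)-\Vol(M,g_0)\bigr)\ \longrightarrow\ 0,
\]
so $d_j-d_0\to 0$ in $L^1(M\times M)$. Choosing a subsequence with $\sum_k\|a_{j_k}-1\|_{L^1}<\infty$ then yields $\sum_k E_{j_k}(p,q)<\infty$, hence $d_{j_k}(p,q)\to d_0(p,q)$ for a.e.\ $(p,q)$; running this for an arbitrary subsequence shows every subsequence of $(d_j)$ has a further subsequence converging a.e.\ to $d_0$, which (the limit being the fixed $(M,g_0)$) is what Theorems \ref{thm-IF conv from Pointwise ae} and \ref{thm-est-SWIF} require downstream.

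I expect the main obstacle to be the uniform density bound in the Santal\'o‑type estimate: controlling the Jacobian of $(p,q,t)\mapsto\gamma_{p,q}(t)$ near the cut locus of $g_0$ and near $\partial M$. Interior convexity is precisely what makes this tractable, confining minimizing geodesics to the interior, where the exponential map of the fixed smooth metric is well behaved away from conjugate points, of which minimizing geodesics have none. The remaining ingredients — a.e.\ uniqueness of $g_0$‑minimizers (the cut locus of a smooth metric is $dV_{g_0}$‑null), measurability of $(p,q)\mapsto\gamma_{p,q}$, and the $L^1$ convergence $a_j\to 1$ — are routine.
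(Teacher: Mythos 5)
Your overall route---reduce to $\limsup_j d_j\le d_0$ using $g_0\le g_j$, bound the $g_j$-speed by the volume density $a_j$ via the eigenvalue inequality, deduce $a_j\to 1$ in $L^1$ from volume convergence, and then average $a_j-1$ over $g_0$-minimizing geodesics with a Santal\'o-type estimate, using interior convexity to confine minimizers to the interior so that the closed-manifold computation of \cite{Allen-Perales-Sormani-VADB} applies---is exactly the intended one: the present paper treats this theorem as the observation of Allen--Perales that the convex case is ``no different to prove'' than the closed case. One technical caveat on the key estimate: the Santal\'o bound is only true as the double integral over $(p,q)$; the analogous estimate for a fixed $p$ is false (test $f=\chi_{B(p,\epsilon)}$ in flat space), because the Jacobian comparison $J(r,\theta)\le C\,J(t,\theta)$ degenerates as $t\to 0$. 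In the coarea argument one must therefore split each geodesic at its midpoint, parametrizing the far half from $p$ and the near half from $q$, after which the Bishop-type ratio bound for $t\le r\le 2t\le 2D$ gives a finite constant depending on $n$, a lower curvature bound for $g_0$, and $D$. Your sketch asserts the bounded density without this step; it is standard but needs to be said.

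The genuine gap is at the end: what you prove is weaker than the statement. From $\int_{M\times M}(d_j-d_0)\,dV_{g_0}\,dV_{g_0}\le C\bigl(\Vol(M,g_j)-\Vol(M,g_0)\bigr)\to 0$ you obtain $d_j\to d_0$ in $L^1(M\times M)$, hence in measure, hence a.e.\ only along a further subsequence of any subsequence. Without a summability rate on $\|a_j-1\|_{L^1}$ this cannot be upgraded to a.e.\ convergence of the full sequence (typewriter-type examples show that $E_j(p,q)=\int_0^{d_0(p,q)}(a_j-1)\circ\gamma_{p,q}\,dt$ need not converge at a.e.\ fixed pair even though it tends to $0$ in $L^1$), and the theorem asserts full-sequence a.e.\ convergence, \eqref{convae2}. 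Closing this requires a pointwise argument valid for all large $j$ at a.e.\ fixed $(p,q)$---for instance, working with a positive-measure family of admissible curves joining $p$ to $q$ so that for each large $j$ one can choose a curve avoiding the set where $a_j$ is large---which your single-geodesic bound cannot deliver. You are right that your weaker conclusion suffices for the downstream Theorem \ref{thm:Main Theorem} (apply Theorem \ref{thm-IF conv from Pointwise ae} along the a.e.-convergent subsequences and use that $d_{\mathcal{F}}$ is a metric with a fixed limit space), but as a proof of the theorem as stated it is incomplete.
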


\section{Examples}\label{sec-Examples}

In this section we give three examples which show the necessity of the assumptions made in Theorem \ref{thm:Main Theorem}.

\subsection{Area Bound Removed}
Here we see an example where if we remove the bound on area in Theorem \ref{thm:Main Theorem} we cannot conclude SWIF convergence.

\begin{ex}\label{ex-Area Removed}
    Let $g_0$ be the standard flat metric on the unit disk $\mathbb{D}^n$. Let $g_j=f_j^2g_0$ be metrics conformal to $g_0$ with smooth conformal factors, $f_j$ that are radially defined from the boundary of the disk as follow:
    \begin{equation}
f_j(r)=
\begin{cases}
j^{\alpha} &\text{ if } r \in [0,1/j]
\\h_j(jr) &\text{ if } r \in [1/j,2/j]
\\ 1 & \text{ if } r \in (2/j,\sqrt{m}\pi].
\end{cases}
\end{equation}
where $0<\alpha < \frac{1}{n}$. Then $  (\mathbb{D}^n, g_j)$ satisfies 
\begin{align}
g_j &\ge g_0, \quad \Vol(\mathbb{D}^n,g_j) \to \Vol(\mathbb{D}^n,g_0),  \quad 
\\\Diam(\mathbb{D}^n,g_j) &\le D_0, \quad \Area(\partial \mathbb{D}^n,g_j) \rightarrow \infty,
\end{align}
and we find that $(\mathbb{D}^n,g_j)$ does not converge in the SWIF sense.
\end{ex}
\begin{remark}
    One can show that $\mathbb{D}^n_j$ does converge in the measured Gromov-Hausdorff sense to the flat disk. That being said, by adding increasingly many splines to Example \ref{ex-Area Removed} one would obtain an example which does not converge in the Gromov-Hausdorff or SWIF senses.
\end{remark}

\begin{proof}
We let $\omega_n$ to be the volume in a Euclidean ball of radius one. We notice that by construction $f_j \ge 1$ so that $g_j \ge g_0$ and $\Vol(\mathbb{D}^n,g_j) \ge \Vol(\mathbb{D}^n,g_0)$. Now we can calculate an upper bound on volume
\begin{align}
    \Vol(\mathbb{D}^n,g_j)&\le j^{\alpha n} \omega_n\left(1-\left(1-\frac{2}{j}\right)^n \right)+\omega_n \left(1-\frac{1}{j}\right)^n
    \\&\rightarrow \omega_n = \Vol(\mathbb{D}^n,g_0).
\end{align}
Hence we see that the volume converges to that of the flat disk. The diameter is bounded since the length of any radial curve from the boundary to the center of the disk  is given by
\begin{align}
    \frac{1}{2}\int_0^1 f_j(r)dr \le \frac{j^{\alpha}}{j}+ \frac{1}{2}\left(1-\frac{1}{j}\right) \le 2.
\end{align}
Lastly we see that the area is given by
\begin{align}
   \Area(\partial \mathbb{D}^n,g_j)&= \partial j^{\alpha(n-1)} \Area(\mathbb{D}^n,g_0) \rightarrow \infty.
\end{align}
Hence we see by \eqref{Eq-Boundary Convergence} that there can be no SWIF limit.
\end{proof}

\subsection{$C^0$ Convergence From Below Removed}

Here we see an example where if we remove the $C^0$ convergence from below in Theorem \ref{thm:Main Theorem} we cannot conclude SWIF convergence to the desired Riemannian manifold.

\begin{ex}[Example 3.1 in  \cite{Allen-Sormani-2}]\label{ex-Cinched-Sphere}
Let $g_0$ be the standard round metric on the sphere ${\mathbb S}^n$.   Let $g_j=f_j^2 g_0$ be
metrics conformal to $g_0$ with smooth conformal factors, $f_j$,
that are radially defined from the north pole with a cinch at the equator as follows:  
 \begin{align}
 f_j(r)=
 \begin{cases}
 1 & r\in[0,\pi/2- 1/j]
 \\  h(j(r-\pi/2)) & r\in[\pi/2- 1/j, \pi/2+ 1/j]
 \\ 1 &r\in [\pi/2+ 1/j, \pi]
 \end{cases}
\end{align}
where $h:[-1,1]\rightarrow \R$ is an even function 
decreasing to $h(0)=h_0\in (0,1)$ and then
increasing back up to $h(1)=1$.   Then $\Sp^m_j =  (\Sp^m, g_j)$ satisfies 
\begin{align}
g_j \le g_0, \quad \Vol(\Sp_j^m) \to \Vol(\Sp_0^n,g_0),  \quad \Diam(\Sp^n,g_j) \le \Diam(\Sp^n,g_0).
\end{align}
But  $(\Sp^n,g_j) \Fto (\Sp^n,g_{\infty})$, where 
$g_\infty=f_\infty^2g_0$ with $ f_{\infty}(r)=h_0$ for  $r=\pi/2$ and  $ f_{\infty}(r)=1$ otherwise. 
\end{ex}

\subsection{Volume Convergence Removed}

Here we see an example where if we remove the volume convergence in Theorem \ref{thm:Main Theorem} we cannot conclude SWIF convergence to the desired Riemannian manifold. 

\begin{ex}[Example 3.5 in \cite{Allen-Sormani-2}]\label{ex-NoL^mConv}
Let  $(\mathbb T^n, g_0)$ be a torus and $h_j:[1,2] \rightarrow [1, \infty)$ be a smooth, decreasing function so that $h_j(1) = j$, $h_j'(1)=h_j'(2)=0$, and $h_j(2) = 1$ so that
\begin{align}
    \frac{1}{j^n}\int_1^2h_j(s)^n s^{n-1}ds \rightarrow 0.\label{ConstructionHyp}
\end{align}
Given a point $p \in \mathbb T^n$,  consider the sequence of functions $f_j:  \mathbb T^n  \to [1,\infty)$ which are radially defined from $p$ by
\begin{equation}
f_j(r)=
\begin{cases}
j &\text{ if } r \in [0,1/j]
\\h_j(jr) &\text{ if } r \in [1/j,2/j]
\\ 1 & \text{ if } r \in (2/j,\sqrt{n}\pi].
\end{cases}
\end{equation}
Then the sequence $(\mathbb T^n, f_j^2 g_0)$ satisfies 
\begin{align}
g_0 \leq g_j,  \quad \Diam(\mathbb T^m_j) \leq 1+ \sqrt{n}\pi, \\
\Vol( \mathbb T^n,g_j )  \to \Vol(B(p,1),g_0)+\Vol(\mathbb T^n, g_0). 
\end{align}
Furthermore,  it converges in intrinsic and flat sense to a torus with a bubble attached, where if $F:\partial \mathbb{D} \rightarrow \mathbb{T}^n$ is given by $F(d)=p$ for a fixed point $p \in \mathbb{T}^n$ and $d_{\infty}$ is the metric on $\mathbb T^m \sqcup \mathbb{D}^n)$ then the sequence converges to
\begin{align}
    (\mathbb T^n \sqcup\mathbb{D}^n, d_\infty/\sim), 
\end{align}
where points are identified through the map $F$.
\end{ex}

\section{Pointwise Almost Everywhere Convergence}\label{sec-Pointwise Convergence}

In this section we will show how to conclude pointwise almost everywhere convergence on a manifold with boundary. By Theorem \ref{thm-IF conv from Pointwise ae} we note that this is enough to prove Theorem \ref{thm:Main Theorem}. We start by fixing some notation for the portion of the manifold which stays a fixed distance from the boundary.

\begin{definition}
  Let $(M,\partial{}M,g_{0})$ be a smooth Riemannian manifold with boundary.
  We let $M_{t}=\{x:d_{0}(x,\partial{}M)<t\}$.
\end{definition}

Now by using the exponential map defined on the boundary of a Riemannian manifold we see that any points $p,q \in M_t^c$ can be connected by a curve in this region whose length is close to the distance between $p$ and $q$.

\begin{lemma}\label{lem:rough_approximate_geodesic}
  Let $(M,\partial{}M,g_{0})$ be a smooth Riemannian manifold with boundary.
  There is a $t_{0}>0$ and a fixed function $O(t)$ such that for any $t\leq t_{0}$
  and $p,q\in{}M_{t}^{c}$ we may find a curve $\gamma\subset{}M_{t}^{c}$
  connecting $p$ to $q$ such that
  \begin{equation}
    \len_{g_{0}}(\gamma)\leq d_{g_{0}}(p,q)+O(t).
  \end{equation}
\end{lemma}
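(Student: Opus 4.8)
The plan is to use the normal exponential map of the boundary to build a collar neighborhood $M_{t}$ in which every point is reached by a short normal geodesic from $\partial M$, and then to correct an arbitrary minimizing geodesic between $p,q\in M_{t}^{c}$ by pushing its incursions into the collar back out to the hypersurface $\partial M_{t}=\{d_{0}(\cdot,\partial M)=t\}$. First I would fix $t_{0}>0$ small enough that the normal exponential map $E\colon\partial M\times[0,t_{0}]\to M$, $E(x,s)=\exp_{x}(s\,\nu(x))$ (with $\nu$ the inward unit normal), is a diffeomorphism onto $\overline{M_{t_{0}}}$; this is standard for a compact smooth manifold with boundary. For $t\leq t_{0}$ this identifies $M_{t}$ with $\partial M\times[0,t)$ and, crucially, gives a Lipschitz nearest-point-type retraction $\Pi_{t}\colon \overline{M_{t}}\to\partial M_{t}$ defined by $\Pi_{t}(E(x,s))=E(x,t)$, whose Lipschitz constant with respect to $g_{0}$ is $1+O(t)$ because the metric in Fermi coordinates is $ds^{2}+h_{s}$ with $h_{s}=h_{0}+O(s)$ uniformly on the compact boundary.

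The main construction: given $p,q\in M_{t}^{c}$, let $\sigma$ be a $g_{0}$-minimizing geodesic from $p$ to $q$, so $\len_{g_{0}}(\sigma)=d_{g_{0}}(p,q)$. The set $\sigma^{-1}(M_{t})$ is a countable union of open intervals $(a_{i},b_{i})$; on each, $\sigma(a_{i})$ and $\sigma(b_{i})$ lie on $\partial M_{t}$. Replace $\sigma|_{[a_{i},b_{i}]}$ by the pushed-out curve $\Pi_{t}\circ\sigma|_{[a_{i},b_{i}]}$, which stays in $\partial M_{t}\subset M_{t}^{c}$. Since $\Pi_{t}$ is identity on $M_{t}^{c}$, the concatenation $\gamma$ of the untouched arcs of $\sigma$ with these replacements is a continuous (piecewise smooth, after mild smoothing) curve in $M_{t}^{c}$ from $p$ to $q$. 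For the length bound, $\len_{g_{0}}(\Pi_{t}\circ\sigma|_{[a_{i},b_{i}]})\leq(1+O(t))\len_{g_{0}}(\sigma|_{[a_{i},b_{i}]})$, and summing over $i$ together with the arcs outside $M_{t}$ gives $\len_{g_{0}}(\gamma)\leq(1+O(t))\len_{g_{0}}(\sigma)=(1+O(t))\,d_{g_{0}}(p,q)\leq d_{g_{0}}(p,q)+O(t)\,\Diam(M,g_{0})$, which is of the claimed form $d_{g_{0}}(p,q)+O(t)$ once we absorb the diameter into the fixed function $O(t)$.

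One subtlety I want to flag: bounding $\sum_{i}\len_{g_{0}}(\sigma|_{[a_{i},b_{i}]})$ by $\len_{g_{0}}(\sigma)$ is fine, but if there are infinitely many excursions the concatenation still has finite length and the Lipschitz-constant estimate passes to the sum without trouble, so countability is not a real obstacle; the genuinely delicate point is establishing the Lipschitz bound $1+O(t)$ for $\Pi_{t}$ uniformly over $\partial M$, which requires the Fermi-coordinate expansion of $g_{0}$ and compactness of $\partial M$. I expect this — controlling how the induced metrics $h_{s}$ on the parallel hypersurfaces vary for $s\in[0,t]$ — to be the main technical step; everything else is bookkeeping. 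A convenient alternative that avoids retractions altogether is to first move $p$ and $q$ themselves are already in $M_{t}^{c}$ so no adjustment is needed there, and instead to build $\gamma$ directly as: go along $\sigma$ until it would enter $M_{t}$, travel within the hypersurface $\partial M_{t}$ (using that $(\partial M_{t},h_{t})$ has intrinsic distances comparable up to $1+O(t)$ to the ambient restricted distances, again by the Fermi expansion), and rejoin $\sigma$ when it re-emerges; the length accounting is identical. Either way the function $O(t)$ depends only on $(M,\partial M,g_{0})$, as required.
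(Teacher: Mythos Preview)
Your proposal is correct and follows essentially the same approach as the paper: both construct $\gamma$ by pushing the incursions of a minimizing geodesic into $M_{t}$ out to the level hypersurface $\partial M_{t}$ via the Fermi-coordinate projection $(x,s)\mapsto(x,t)$, and both arrive at the bound $d_{g_{0}}(p,q)+O(t)\cdot\Diam(M,g_{0})$ from the expansion $h(s)=h_{0}+O(s)$ of the induced metrics on the parallel hypersurfaces. The only cosmetic difference is that the paper phrases the key estimate as a pointwise speed bound $|\dot\gamma|^{2}\leq|\dot c|^{2}+O(t)$ rather than as a Lipschitz constant $1+O(t)$ for your retraction $\Pi_{t}$, but this is the same computation.
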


\begin{proof}
  As $(M,\partial{}M,g_{0})$ is a smooth Riemannian manifold, there is a $t_{0}$
  such that $\exp:\partial{}M\times{}[0,t_{0})\rightarrow{}M_{t_{0}}$ is
  a diffeomorphism.
  Furthermore, we have that
  \begin{equation}
    \exp^{*}g_{0}=dt^{2}+h(t),
  \end{equation}
  where $h(t)$ is a family
  of metrics on $\partial{}M$ such that $h(t)=\left.g_{0}\right|_{\partial{}M}+O(t)$.

  Let $c(s)$ be a unit speed geodesic connecting $p$ to $q$, and observe that
  for segments of $c$ lying in $M_{t_{0}}$ we may decompose $c$ into tangential
  and normal parts: 
  \begin{equation}
    c(s)=\Bigl(c^{\mathrm{T}}(s),c^{\mathrm{N}}(s)\Bigr).
  \end{equation}
  We define $\gamma^{t}_{p,q}$ as follows:
  \begin{equation}
    \gamma^{t}_{p,q}(s)=
    \begin{cases}
      c(s)&\text{ if }c(s)\in{}M_{t}^{c}
      \\
      \Bigl(c^{\mathrm{T}}(s),t\Bigr)&\text{ if }c(s)\in{}M_{t}
    \end{cases}
    .
  \end{equation}
  We observe that for all $s$ such that $c(s)\in{}M_{t}$ we have
  \begin{align}\label{eq:bound_norm_of_tangent_part}
    \lvert \dot{\gamma}^{t}_{p,q}(s)\rvert^{2}_{h(t)}&\leq
    \lvert\dot{c}^{\mathrm{N}}(s)\rvert^{2}+ \lvert{}\dot{c}^{\mathrm{T}}(s)\rvert^{2}_{h(t)}
    \\
                                                     &=\lvert{}\dot{c}(s)\rvert^{2}_{g_{0}}
                                                     +\lvert\dot{c}^{T}(s)\rvert^{2}_{h(t)}-
    \lvert\dot{c}^{T}(s)\rvert^{2}_{h\bigl(c^{\mathrm{N}}(s)\bigr)}
  \end{align}
  The difference $\lvert\dot{c}^{T}(s)\rvert^{2}_{h(t)}-
  \lvert\dot{c}^{T}(s)\rvert^{2}_{h\bigl(c^{\mathrm{N}}(s)\bigr)}$ is bounded above by
  \begin{equation}
    \lvert h(t)-h\bigl(c^{\mathrm{N}}(s)\bigr)\rvert_{h\bigl(c^{\mathrm{N}}(s)\bigr)},
  \end{equation}
  which itself is bounded by $O(\lvert t-c^{\mathrm{N}}(s)\rvert)\leq O(t)$.
  From the definition of $\gamma^{t}_{p,q}$ we see that we have
  \begin{equation}
    \len_{g_{0}}(\gamma^{t}_{p,q})\leq d_{g_{0}}(p,q)+O(t)\cdot\mathrm{diam}_{g_{0}}(M).
  \end{equation}
\end{proof}

It was noted by Allen and Perales \cite{Allen-Perales} that when the boundary is convex with respect to $g_0$ the convex case is very similar to the compact case without boundary. Here we notice that by doing a conformal change near the boundary we can always make the boundary convex while preserving the metric on most of the manifold.

\begin{proposition}\label{prop:convexify_boundary}
  Let $(M,\partial{}M,g_{0})$ be a smooth Riemannian manifold with boundary,
  and let $t_{0}>0$ be any number such that $M_{t_{0}}\simeq\partial{}M\times{}[0,t_{0})$
  under the exponential map.
  Then, there is a smooth function $\phi:M\rightarrow{}[1,\infty{})$ such that
  $\partial{}M$ is convex with respect to $\widetilde{g}_{0}=\phi g_{0}$,
  and $\left.\phi\right|_{M_{t_{0}}^{c}}=1$.
\end{proposition}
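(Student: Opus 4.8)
The plan is to work entirely inside the collar neighborhood $M_{t_0}\simeq\partial M\times[0,t_0)$, where $\exp^*g_0=dt^2+h(t)$, and to choose $\phi$ of the form $\phi=\psi(t)$ for a function $\psi:[0,t_0)\to[1,\infty)$ depending only on the distance $t$ to the boundary (extended by $1$ on $M_{t_0}^c$). First I would recall the second fundamental form criterion: $\partial M$ is convex with respect to a metric $\widetilde g_0$ if the second fundamental form of $\partial M$ as a level set $\{t=0\}$, computed with the inward normal, is positive definite (so that geodesics starting tangent to the boundary curve into the interior). Under a conformal change $\widetilde g_0=e^{2u}g_0$ the mean-curvature-type quantity and the full second fundamental form transform by a term involving the normal derivative of $u$; concretely, if $\nu$ denotes the inward $g_0$-unit normal, then the new second fundamental form is $\widetilde{\mathrm{II}}=e^{u}\bigl(\mathrm{II}+ (\partial_\nu u)\,g_0|_{\partial M}\bigr)$ up to the conventional sign. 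So if we write $\phi=e^{2u}$ with $u=u(t)$, the boundary becomes convex as soon as $\partial_t u|_{t=0}$ (which equals $-\partial_\nu u$, since $t$ increases into the interior) is sufficiently negative — large enough to dominate the possibly-indefinite $\mathrm{II}$ of $(\partial M,g_0)$, which is bounded since $\partial M$ is compact.

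The key steps, in order, are: (1) fix $t_0$ from the hypothesis so the collar is a diffeomorphism and $\exp^*g_0=dt^2+h(t)$; (2) let $\kappa_0>0$ be an upper bound for $|\mathrm{II}|$ over $\partial M$ in the metric $g_0$, so that convexity of $\{t=0\}$ for $e^{2u}g_0$ is guaranteed whenever $\partial_t u(0)<-\kappa_0$; (3) choose $\psi:[0,t_0)\to[1,\infty)$ smooth, with $\psi\equiv 1$ near $t=t_0$, $\psi\geq 1$ everywhere, $\psi(0)$ large and $\psi'(0)$ very negative — for instance take $\psi(t)=1+A\,\eta(t)$ where $\eta$ is a fixed smooth bump equal to $1$ near $t=0$ and $0$ for $t\geq t_0/2$, but adjusted so $\psi'(0)\ne 0$; more robustly, set $\psi(t)=\exp\bigl(2u(t)\bigr)$ with $u(t)=\beta(t)\cdot\bigl(-(\kappa_0+1)\,t + b\bigr)$ cut off by a bump $\beta$ supported in $[0,t_0)$ and equal to $1$ on $[0,t_0/3]$, with the constant $b>0$ chosen so that $u\ge0$ (hence $\phi\ge 1$); then $\partial_t u(0)=-(\kappa_0+1)<-\kappa_0$ as required; (4) extend $\phi$ by $1$ on all of $M_{t_0}^c$, which is consistent since $\psi\equiv 1$ near $t_0$, giving a smooth global function $\phi:M\to[1,\infty)$ with $\phi|_{M_{t_0}^c}=1$; (5) verify from the conformal transformation formula that $\widetilde{\mathrm{II}}>0$, i.e. $\partial M$ is convex with respect to $\widetilde g_0=\phi g_0$.

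The main obstacle is step (3)–(5): getting the conformal transformation of the second fundamental form right, including the sign conventions for which direction the normal points and which sign of $\partial_t u(0)$ makes the boundary convex rather than concave, and then checking that this single inequality on the $1$-jet of $\phi$ at $\partial M$ really does force geodesics tangent to $\partial M$ to enter the interior (this is exactly the statement that the definition of convexity in Theorem~\ref{thm-Pointwise ae Convergence Convex Boundary} — every $\widetilde g_0$-minimizing geodesic between interior points stays interior — follows from strict positivity of the boundary's second fundamental form, which is the standard characterization for compact manifolds with boundary). The remaining demands — $\phi\ge 1$ and $\phi|_{M_{t_0}^c}=1$ — are then trivially arranged by adding a large enough constant to $u$ inside the region where the cutoff $\beta$ is active and by the support condition on $\beta$. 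Everything else, including smoothness of $\phi$ across $t=t_0$, is routine since all constructions take place with functions of the single smooth coordinate $t=d_0(\cdot,\partial M)$, which is smooth on the collar $M_{t_0}$.
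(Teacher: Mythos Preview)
Your approach is correct and essentially the same as the paper's: both take $\phi$ to be a function of the distance $t=d_{g_0}(\cdot,\partial M)$, equal to $1$ for $t\ge t_0$ and $\ge 1$ everywhere, and arrange its normal derivative at $\partial M$ to dominate the original second fundamental form so that $\partial M$ becomes strictly convex. The paper carries out the bookkeeping via $2A_{\widetilde g_0}=-\mathcal L_{\widetilde\nu}(\phi g_0)$ rather than the conformal transformation law for $\mathrm{II}$, and writes $\phi=\tau(t)$ with $\tau\ge1$, $\tau'\le0$, and $\tau'(0)$ sufficiently negative---exactly your $e^{2u(t)}$ with $u'(0)<0$. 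One small caution on the sign you flagged: since $t$ increases into the interior and $\nu$ is the \emph{inward} normal, $\partial_\nu=\partial_t$ (not $-\partial_t$); your final construction with $\phi$ large on $\partial M$ and decreasing inward is nonetheless the correct one and matches the paper, so the slip is only in the intermediate justification.
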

\begin{proof}
  Let $A_{\widetilde{g}_{0}}$ denote the second fundamental of $\partial{}M$
  with respect to the metric $\widetilde{g}_{0}$, and let $\widetilde{\nu}$ denote
  the unit inward normal to $\partial{}M$ with respect to $\widetilde{g}_{0}$.
  Then, we have that
  \begin{equation}
    2A_{\widetilde{g}_{0}}=-\mathcal{L}_{\widetilde{\nu}}(\phi g_{0})=
    -\left(\frac{\partial{}\phi}{\partial{}\widetilde{\nu}}g_{0}+\phi\mathcal{L}_{\widetilde{\nu}}(g_{0}) \right)
  \end{equation}
  Denote by $\|\mathcal{L}_{\widetilde{\nu}}(g_{0})\|_{g_{0}}$ the sup-norm of
  $\mathcal{L}_{\widetilde{\nu}}(g_{0})$ on $\partial{}M$ with respect to $g_{0}$.
  If
  \begin{equation}
    \frac{\partial{}\phi}{\partial{}\widetilde{\nu}}
    \leq
    -2\phi\|\mathcal{L}_{\widetilde{\nu}}(g_{0})\|_{g_{0}},
  \end{equation}
  then we see that $A_{\widetilde{g}_{0}}$ is positive definite,
  and so $\partial{}M$ is convex with respect to $\widetilde{g}_{0}$.
  One may check that $\mathcal{L}_{\widetilde{\nu}}(g_{0})=-\frac{2}{\sqrt{\phi}}A_{g_{0}}$, where
  $A_{g_{0}}$ denotes the second fundamental form of $\partial{}M$ with respect to $g_{0}$,
  and so
  $\|\mathcal{L}_{\widetilde{\nu}}(g_{0})\|_{g_{0}}=\frac{2}{\sqrt{\phi}}\|-A_{g_{0}}\|_{g_{0}}$.
  Furthermore, letting $\nu$ denote the unit normal vector field to $\partial{}M$ with respect
  to $g_{0}$, we have that $\frac{\partial{}\phi}{\partial\widetilde{\nu}}=
  \frac{1}{\sqrt{\phi}}\frac{\partial{}\phi}{\partial{}\nu}$.
  It remains to show that such a function $\phi$ exists.

  We may choose a smooth function $\tau:[0,\infty{})\rightarrow{}[1,\infty{})$ which satisfies
  the following properties:
  \begin{align}
    &\left.\tau\right|_{[t_{0},\infty)}=1;
      \\
    &\frac{d\tau}{dt}\leq 0;
      \\
    &\left.\frac{d\tau}{dt}\right|_{0}
      \leq
      -4\tau(0)\|-A_{g_{0}}\|_{g_{0}}.
  \end{align}
  Now, let $\phi(x)=\tau\circ d_{g_{0}}(x,\partial{}M)$; we may compute the following:
  \begin{equation}
    \left.\frac{\partial{}\phi}{\partial{}\widetilde{\nu}}\right|_{\partial{}M}=
    \left.\frac{1}{\sqrt{\phi}}\frac{\partial{}\phi}{\partial\nu}\right|_{\partial{}M}=
    \frac{\tau'(0)}{\sqrt{\tau(0)}}\leq-4\sqrt{\tau(0)}\|-A_{g_{0}}\|_{g_{0}}=
    -2\left.\phi\right|_{\partial{}M}\|\mathcal{L}_{\widetilde{\nu}}g_{0}\|_{g_{0}}.
  \end{equation}
\end{proof}

Now by taking advantage of Lemma \ref{lem:rough_approximate_geodesic}, Proposition
\ref{prop:convexify_boundary}, and point-wise convergence from \cite{Allen-Perales}
in the convex case we can show point-wise almost everywhere convergence on $M_t \times M_t$.
\begin{theorem}\label{thm:a.e_convergence_on_distance_level_sets}
  Let $(M,\partial{}M,g_{0})$ be a Riemannian manifold with boundary, and let $g_{i}$
  be a sequence of metrics which VADB converge to $g_{0}$.
  Denote by $d_{j}$ the distance function of $g_{j}$ and $d_{0}$ the distance function of
  $g_{0}$.
  Then, for every $t>0$ we have that $d_{j}\xrightarrow{a.e.}d_{0}$ on $M^{c}_{t}\times{}M^{c}_{t}$.
\end{theorem}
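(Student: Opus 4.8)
The plan is to compare the original sequence $(M,g_j)$ with the conformally modified sequence $(M,\widetilde{g}_j)$ obtained by multiplying through by the convexifying factor $\phi$ from Proposition \ref{prop:convexify_boundary}. Fix $t>0$; we may assume $t\le t_0$ where $t_0$ is as in Lemma \ref{lem:rough_approximate_geodesic} and Proposition \ref{prop:convexify_boundary}, since $M_t^c\subset M_{t'}^c$ for $t'\le t$, so a.e. convergence on the larger set yields a.e. convergence on the smaller one. Set $\widetilde{g}_j=\phi g_j$ and $\widetilde{g}_0=\phi g_0$. Because $\phi\ge 1$ is fixed and smooth with $\phi|_{M_t^c}=1$, the sequence $\widetilde{g}_j$ still satisfies all four VADB hypotheses: the lower bound $\widetilde{g}_j\ge(1-C_j)\widetilde{g}_0$ is just the original bound scaled by $\phi$; diameters stay bounded since $\widetilde{g}_j\le(\max_M\phi)g_j$; volumes converge since $\Vol(M,\widetilde{g}_j)=\int_M\phi^{n/2}\,dV_{g_j}\to\int_M\phi^{n/2}\,dV_{g_0}=\Vol(M,\widetilde{g}_0)$ by dominated convergence applied to the fixed weight $\phi^{n/2}$ together with volume convergence (one also uses $g_j\ge(1-C_j)g_0$ to control the densities from below, or simply notes $dV_{g_j}\to dV_{g_0}$ weakly under these hypotheses); and $\Area(\partial M,\widetilde{g}_j)=\Area(\partial M,g_j)\le A$ since $\phi=1$ near $\partial M$. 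By Proposition \ref{prop:convexify_boundary}, $\partial M$ is convex with respect to $\widetilde{g}_0$, so Theorem \ref{thm-Pointwise ae Convergence Convex Boundary} applies and gives $\widetilde{d}_j\to\widetilde{d}_0$ for $dV_{\widetilde{g}_0}\times dV_{\widetilde{g}_0}$-a.e. pair, equivalently $dV_{g_0}\times dV_{g_0}$-a.e. pair since $\phi$ is bounded above and below.

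The remaining task is to transfer a.e. convergence of $\widetilde{d}_j$ to a.e. convergence of $d_j$ on $M_t^c\times M_t^c$. The key observation is that $\widetilde{g}_j$ and $g_j$ agree exactly on $M_t^c$, and on $M_t$ we have $\widetilde{g}_j\ge g_j$ (since $\phi\ge1$), so trivially $d_j\le\widetilde{d}_j$ everywhere; thus it suffices to produce a matching lower bound $d_j(p,q)\ge\widetilde{d}_j(p,q)-O(t)$ for $p,q\in M_t^c$. This is exactly where Lemma \ref{lem:rough_approximate_geodesic} enters. Given $p,q\in M_t^c$, apply the lemma in the metric $g_0$ (or in $\widetilde{g}_0$, which agrees with $g_0$ on $M_t^c$): there is a curve $\gamma\subset M_t^c$ from $p$ to $q$ with $\len_{g_0}(\gamma)\le d_{g_0}(p,q)+O(t)$. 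Since $\gamma$ stays in $M_t^c$ where $g_j=\widetilde{g}_j$, we have $\widetilde{d}_j(p,q)\le\len_{\widetilde{g}_j}(\gamma)=\len_{g_j}(\gamma)$. Now bound $\len_{g_j}(\gamma)$: using the lower-bound hypothesis reversed requires an upper bound on $g_j$ along $\gamma$, which we do not have pointwise — so instead I would run the Lemma \ref{lem:rough_approximate_geodesic} construction directly on the curve realizing (nearly) $\widetilde{d}_j(p,q)$, pushing its excursions into $M_t$ back out to the level $t$ surface; on $M_t^c$ the two metrics coincide, and the error incurred by the pushing is controlled by the smooth geometry of the collar $\partial M\times[0,t_0)$, giving $d_j(p,q)\ge\widetilde{d}_j(p,q)-O(t)$ with $O(t)$ depending only on $g_0$ and $\mathrm{diam}$, uniformly in $j$. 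Combining the two inequalities, $|d_j(p,q)-\widetilde{d}_j(p,q)|\le O(t)$ for all $p,q\in M_t^c$ and all $j$.

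At this point, for a.e. $(p,q)\in M_t^c\times M_t^c$ we have $\widetilde{d}_j(p,q)\to\widetilde{d}_0(p,q)=d_0(p,q)$ (the last equality because $\gamma$-type curves in $M_t^c$ realize $d_0$ up to $O(t)$ and $\widetilde{g}_0=g_0$ there, again via Lemma \ref{lem:rough_approximate_geodesic} applied to both metrics), and $|d_j(p,q)-\widetilde{d}_j(p,q)|\le O(t)$; hence $\limsup_j|d_j(p,q)-d_0(p,q)|\le 2\,O(t)$. This bounds the limsup by a quantity depending on $t$, not zero — so the final step is a diagonal argument: the conclusion we want is a.e. convergence on $M_t^c\times M_t^c$ for the fixed $t$, and the honest way to get it is to apply the whole construction with a sequence $t_k\searrow 0$, intersect the full-measure sets, and on their intersection conclude $\limsup_j|d_j(p,q)-d_0(p,q)|\le 2\,O(t_k)$ for every $k$, hence $=0$. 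The main obstacle is the lower-bound transfer in the previous paragraph: one must argue that projecting a near-minimizing $g_j$-path (for a metric that is only $C^0$ and only bounded below, not above) onto the collar level set $t$ changes its length by at most $O(t)$ uniformly in $j$ — the delicate point being that the projection map $\exp(\,\cdot\,,s)\mapsto\exp(\,\cdot\,,t)$ has Lipschitz constant $1+O(t)$ measured in $g_0$, and one needs this to interact correctly with $g_j$; here the hypothesis $g_j\ge(1-C_j)g_0$ only helps in one direction, so the argument has to be set up so that the metric being enlarged by the projection is $\widetilde{g}_j$ on the collar, where $\widetilde{g}_j\ge g_j$ gives exactly the inequality needed.
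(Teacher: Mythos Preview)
Your overall architecture---conformally modify by the $\phi$ of Proposition~\ref{prop:convexify_boundary}, apply Theorem~\ref{thm-Pointwise ae Convergence Convex Boundary} to the tilded sequence, and transfer back---is exactly the paper's approach. But the second paragraph, where you try to establish the two-sided bound $|d_j-\widetilde{d}_j|\le O(t)$ on $M_t^c\times M_t^c$, is where the argument goes wrong, and you correctly sense this when you call it ``the main obstacle.'' The lower-bound transfer $d_j\ge\widetilde{d}_j-O(t)$ would require projecting a $g_j$-near-geodesic onto the level set $\{d_0(\cdot,\partial M)=t\}$ and controlling how its $g_j$-length changes; since $g_j$ has no product structure in the collar and no upper bound in terms of $g_0$, the Lipschitz constant of the projection measured in $g_j$ is simply uncontrolled. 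Your last sentence tries to save this by invoking $\widetilde{g}_j\ge g_j$, but that inequality points the wrong way for what you need.

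The point you are missing is that this lower-bound transfer is entirely unnecessary: the hypothesis $g_j\ge(1-C_j)g_0$ already gives $d_j\ge\sqrt{1-C_j}\,d_0$ everywhere, so $\liminf_j d_j\ge d_0$ is free. Only the \emph{upper} bound $\limsup_j d_j\le d_0$ requires work, and for that the trivial inequality $d_j\le\widetilde{d}_j$ (from $\phi\ge 1$) together with $\widetilde{d}_j\to\widetilde{d}_0$ a.e.\ and $\widetilde{d}_0\le d_0+O(t_1)$ (from Lemma~\ref{lem:rough_approximate_geodesic} applied only to $g_0$) suffices. The paper packages this as a contradiction argument: given $\varepsilon>0$, choose the collar width $t_1$ so that $O(t_1)<\varepsilon/2$, then $\limsup_j d_j\le\limsup_j\widetilde{d}_j=\widetilde{d}_0\le d_0+\varepsilon/2$ a.e.\ on $M_{t_0}^c\times M_{t_0}^c$. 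Your diagonal-in-$t$ argument at the end would reach the same conclusion once you drop the spurious lower bound. One minor slip: $\phi$ equals $1$ \emph{away} from $\partial M$, not near it, so $\Area(\partial M,\widetilde{g}_j)$ is not equal to $\Area(\partial M,g_j)$; it is still bounded since $\phi$ is bounded.
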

\begin{proof}
  Suppose the statement is false. Since $t_{1}<t_{0}$ implies that
  $M_{t_{0}}^{c}\subset{}M_{t_{1}}^{c}$ it follows that for any $t>0$ there exists an
  $\varepsilon{}>0$ and a measurable subset $B\subset{}M_{t}\times{}M_{t}$
  such that $\lvert{}B\rvert>0$ and
  \begin{equation}
    \limsup_{j\rightarrow{}\infty{}}\left.(d_{j}-d_{0})\right|_{B}>\varepsilon{}.
  \end{equation}
  Let us fix a $t_{0}>0$ small enough so that $M_{t}\simeq\partial{}M\times{}[0,t)$, and
  let $\varepsilon{}>0$ and $B\subset{}M_{t_{0}}\times{}M_{t_{0}}$ be as above.

  Using Lemma \ref{lem:rough_approximate_geodesic} there exists a $t_{1}\leq t_{0}$ such that
  for any $p,q\in{}M_{t_{0}}$ we may find a curve $\gamma_{p,q}$ satisfying 
  \begin{equation}
    \len_{g_{0}}\left(\gamma_{p,q}\right)\leq d_{g_{0}}(p,q)+
    \frac{\varepsilon{}}{2}
  \end{equation}
  which lies in $M_{t_{1}}^{c}$.
  We shall modify the metrics $g_{i}$ and $g_{0}$ as follows.
  Let $\phi$ be as in Proposition \ref{prop:convexify_boundary} with
  \begin{equation}
    \left.\phi\right|_{M_{t_{1}}^{c}}=1,
    \end{equation}
  and set $\widetilde{g}_{0}=\phi g_{0}$.
  The metric $\widetilde{g}_{0}$ is a smooth Riemannian metric on $(M,\partial{}M)$ such that
  $\partial{}M$ is convex.
  Furthermore, by our length estimate on $\gamma_{p,q}\subset{}M_{t_{1}}^{c}$ and the fact that
  $\left.\phi\right|_{M_{t_{1}}^{c}}=1$, we see that
  \begin{equation}\label{eq:distance_estimate}
    d_{\widetilde{g}_{0}}(p,q)\leq d_{g_{0}}(p,q)+\frac{\varepsilon{}}{2}.
  \end{equation}
  This estimate is independent of the $p$ and $q$ in $M_{t_{0}}^{c}$.

  Consider now the sequence of metrics $\widetilde{g}_{i}=\phi g_{i}$,
  and observe that they VADB converge to $\widetilde{g}_{0}=\phi g_{0}$.
  Since $\partial{}M$ is convex with respect to $\widetilde{g}_{0}$, it follows from
  the results in \cite{Allen-Perales} Theorem \ref{thm-Pointwise ae Convergence Convex Boundary} that

    \begin{equation}
    d_{\widetilde{g}_{j}}(p,q)\rightarrow
    d_{\widetilde{g}_{0}}(p,q),  \qquad \textrm{ for } dV_{\widetilde{g}_{0}}\otimes{}dV_{\widetilde{g}_{0}} \text{ a.e. } (p,q) \in M \times M.
  \end{equation}
  The above almost everywhere convergence combined with \eqref{eq:distance_estimate} shows that
  \begin{equation}
    \limsup_{i\rightarrow{}\infty{}}d_{\widetilde{g}_{j}}\leq d_{g_{0}}+
    \frac{\varepsilon{}}{2}
  \end{equation}
  for $dV_{\widetilde{g}_{0}}\otimes{}dV_{\widetilde{g}_{0}}$ almost every
  $p$ and $q$ in $M_{t_0}^{c}$.
  Since $\phi$ is bounded above and below,
  we see that $dV_{g_{0}}$ and $dV_{\widetilde{g}_{0}}$ are
  equivalent measures, and so the above holds for $dV_{g_{0}}\otimes{}dV_{g_{0}}$ almost every
  $p$ and $q$ in $M_{t_{0}}^{c}$.
  However, as $\widetilde{g}_{j}\geq g_{j}$, this contradicts the existence of the set
  $B$, and so we have shown that
   \begin{equation}
    d_{g_{j}}(p,q)\rightarrow d_{g_{0}}(p,q)
  \end{equation}
 for  $dV_{g_{0}}\otimes{}dV_{g_{0}}$  a.e.  $(p,q) \in M_{t_{0}}\times{}M_{t_{0}}$.
\end{proof}

Now we observe that point-wise convergence on $M\times M$ follows quickly from Theorem \ref{thm:a.e_convergence_on_distance_level_sets}
\begin{corollary}
  Let $(M,\partial{}M,g_{0})$ be a Riemannian manifold with boundary, and let $g_{i}$ be 
  a sequence of metrics which VADB converge to $g_{0}$.
  Then $d_{g_{j}}$ converges to $d_{g_{0}}$ almost everywhere on $M\times{}M$ with respect to
  $dV_{g_{0}}\otimes{}dV_{g_{0}}$.
\end{corollary}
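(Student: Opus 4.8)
The plan is to derive the corollary from Theorem \ref{thm:a.e_convergence_on_distance_level_sets} by a covering/exhaustion argument combined with a measure-theoretic observation about $M\times M$. The key point is that the sets $M_t^c\times M_t^c$ exhaust $M\times M$ up to a $dV_{g_0}\otimes dV_{g_0}$-null set as $t\searrow 0$: indeed $M\setminus\bigcup_{t>0}M_t^c = \partial M$, which has zero $dV_{g_0}$-measure in $M$ (it is a smooth hypersurface), so $(M\times M)\setminus\bigcup_{t>0}(M_t^c\times M_t^c)$ is contained in $(\partial M\times M)\cup(M\times\partial M)$, a $dV_{g_0}\otimes dV_{g_0}$-null set.

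First I would fix a sequence $t_k\searrow 0$ (say $t_k=1/k$). For each $k$, Theorem \ref{thm:a.e_convergence_on_distance_level_sets} applied with $t=t_k$ gives a null set $N_k\subset M_{t_k}^c\times M_{t_k}^c$ (null with respect to the restriction of $dV_{g_0}\otimes dV_{g_0}$) off of which $d_{g_j}(p,q)\to d_{g_0}(p,q)$. Extend each $N_k$ to a $dV_{g_0}\otimes dV_{g_0}$-null subset of $M\times M$ — this is automatic since $M_{t_k}^c\times M_{t_k}^c$ is a measurable subset of $M\times M$ and a set null for the restriction is null in the ambient space. Then set
\begin{equation}
  N=\Bigl(\bigcup_{k=1}^\infty N_k\Bigr)\cup(\partial M\times M)\cup(M\times\partial M).
\end{equation}
As a countable union of null sets, $N$ is $dV_{g_0}\otimes dV_{g_0}$-null.

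Now take any $(p,q)\in(M\times M)\setminus N$. Since $(p,q)\notin(\partial M\times M)\cup(M\times\partial M)$, both $p$ and $q$ lie in the interior, so $d_{g_0}(p,\partial M)>0$ and $d_{g_0}(q,\partial M)>0$; choosing $k$ large enough that $t_k<\min\{d_{g_0}(p,\partial M),d_{g_0}(q,\partial M)\}$ we get $(p,q)\in M_{t_k}^c\times M_{t_k}^c$. Since also $(p,q)\notin N_k$, the conclusion of Theorem \ref{thm:a.e_convergence_on_distance_level_sets} applies and gives $d_{g_j}(p,q)\to d_{g_0}(p,q)$. This holds for every $(p,q)$ outside the null set $N$, which is exactly the claim.

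I do not anticipate a serious obstacle here; the only point requiring care is the measure-theoretic bookkeeping — confirming that a set which is null for $dV_{g_0}\otimes dV_{g_0}$ restricted to the measurable subset $M_t^c\times M_t^c$ is genuinely null in $M\times M$, and that $\partial M\times M$ and $M\times\partial M$ are $dV_{g_0}\otimes dV_{g_0}$-null (which follows from $\mathcal H^n(\partial M)$-type considerations, i.e. $\partial M$ having $dV_{g_0}$-measure zero as a lower-dimensional submanifold). Everything else is the standard ``countable exhaustion of a full-measure set'' argument.
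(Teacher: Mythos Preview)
Your argument is correct and follows essentially the same route as the paper's own proof: choose a sequence $t_k\searrow 0$, use Theorem \ref{thm:a.e_convergence_on_distance_level_sets} on each $M_{t_k}^c\times M_{t_k}^c$ to get null exceptional sets, take their countable union together with the boundary, and note that any interior pair lies in some $M_{t_k}^c\times M_{t_k}^c$. If anything, your write-up is slightly more careful than the paper's, which writes the bad set as $\partial M\cup\bigcup_n B_n$ without explicitly unpacking this as $(\partial M\times M)\cup(M\times\partial M)$ inside $M\times M$.
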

\begin{proof}
  Let $t_{n}$ be a sequence of positive numbers tending to zero, and for each $t_{n}$
  let $B_{n}\subset{}M_{t_{n}}^{c}\times{}M_{t_{n}}^{c}$ be the set on which
  $d_{g_{j}}$ does not converge to $d_{g_{0}}$.
  We see that $\lvert{}B_{n}{}\rvert_{}=0$ for all $n$ by Theorem
  \ref{thm:a.e_convergence_on_distance_level_sets}.
  Observe that $d_{g_{j}}$ converges on the set $\Bigl(\partial{}M\cup{}
  \bigcup_{n=1}^{\infty{}}B_{n}\Bigr)^{c}$,
  and $\partial{}M\cup{}\bigcup_{n=1}^{\infty{}}B_{n}$ has measure zero.
\end{proof}

\bibliographystyle{plain}
\bibliography{bibliography}

\end{document}